\newtheorem{theorem}{Theorem}[subsection]
\newtheorem{proposition}{Proposition}[subsection]
\newtheorem{definition}{Definition}[subsection]
\newtheorem{remark}{Remark}[subsection]
\def\hpic #1 #2 {\mbox{$\begin{array}[c]{l} \epsfig{file=#1,height=#2}
\end{array}$}}
\def\vpic #1 #2 {\mbox{$\begin{array}[c]{l} \epsfig{file=#1,width=#2}
\end{array}$}}
\newcommand{\D}{\displaystyle}
\newcommand {\5}{\vskip 5pt}
\begin{document}
\title{On the construction of knots and links from Thompson's groups.}
\author{Vaughan F. R. Jones}
\thanks{}

\begin{abstract} We review recent developments in the theory of Thompson group representations related to 
knot theory.
\end{abstract}
\keywords{Thompson group, knot, link, braid, representation, skein theory} 
 \subjclass[2010]{ 57M25, 57M27, 20F36, 20F38,22D10}
\maketitle
\tableofcontents
\section{Introduction} A few years ago now a method of geometric origin was introduced for constructing
representations of Thompson's groups $F$ and $T$ of piecewise linear homeomorphisms of $[0,1]$ and $S^1$
respectively. Applying this construction to the category of Conway tangles gave a way of constructing a link
from a Thompson group element. It was shown in \cite{jo4} that all links arise in this fashion. 
The knot theoretic outcome of this construction can be summed up in the following result, where the diagrams drawn
are hopefully sufficiently clear to indicate the general case (the dashed lines indicate what has been essentially
added to the pair of binary planar rooted trees-if one removes them from the knot on the right, the tree structures top and bottom 
should be apparent). For details consult \cite{jo4}.
\begin {theorem} Let $R$ be the ring of formal linear combinations (over $\mathbb Z$) of isotopy classes of unoriented link diagrams
with multiplication given by distant union and conjugation given by mirror image.
There is an $R$-module $\mathcal V$ with $R$-valued sesquilinear inner product $\langle,\rangle$, together with a privileged element
$\Omega \in \mathcal V$, and a $\langle,\rangle$-preserving $R$-linear action $\pi$ of Thompson's group $F$ on $\mathcal V$ such
that, for instance, $$\mbox{ for $g=$} \vpic{binarypair} {1in}  ,\qquad \langle \pi(g) \Omega,\Omega \rangle= \vpic {knotfrombinary} {1in} $$ 
if  $g\in F$ is given by the "pair of trees" representation (see \cite{CFP})
as above. Morevoer any link in $\mathbb R^3$  arises in this way.
\end{theorem}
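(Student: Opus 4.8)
The plan is to run the general machine that produces actions of Thompson's group out of the fact, recalled in \cite{CFP}, that $F$ is the group of fractions of the category $\mathcal F$ of binary planar forests, and then to specialise the target to tangles so that the distinguished matrix coefficient becomes a link. First I would set up $\mathcal F$: its objects are the positive integers, a morphism $n\to m$ is a binary forest with $n$ roots and $m$ leaves, composition is stacking, and the monoidal product is juxtaposition. An element of $F$ is then a fraction $t_+t_-^{-1}$ represented by a pair of trees $(t_+,t_-)$ with a common number $n$ of leaves, two pairs defining the same element exactly when they admit a common expansion (stabilisation by grafting a forest onto both). This ``pair of trees'' normal form is taken as known.

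Next I would fix the elementary Conway tangle $\tau$ assigned to the single caret $Y\colon 1\to 2$ and extend it to a monoidal functor $\Phi$ from $\mathcal F$ to the category $\mathcal T$ of unoriented tangles by replacing each trivalent vertex of a forest with a copy of $\tau$ and then composing and juxtaposing according to the planar structure. The one genuine point here is \emph{functoriality}: that $\Phi(f\circ g)=\Phi(f)\circ\Phi(g)$ and that $\Phi$ respects the side-by-side product. Since forest composition is literally the stacking of planar diagrams and tangle composition is stacking of tangles, this reduces to a diagram check showing that the isotopy class of the assembled tangle depends only on the forest and not on its decomposition into carets. For the final surjectivity statement it is essential that $\tau$ be chosen to contain a crossing.

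With $\Phi$ in hand I would build $\mathcal V$ as the direct limit $\varinjlim$ of the system $n\mapsto\{\text{tangles with prescribed boundary at level }n\}$ along $\mathcal F$: as an $R$-module it is generated by classes $[t,\sigma]$ of a tree $t$ together with a tangle $\sigma$ whose boundary matches the leaves of $\Phi(t)$, modulo the expansion relation $[t,\sigma]=[t',\Phi(e)\sigma]$ whenever $t'$ is obtained from $t$ by grafting a forest $e$. The $R$-action is distant union with a link, $\Omega$ is the class of the trivial tree with the trivial tangle, and $F$ acts by $\pi(t_+t_-^{-1})[t_-,\sigma]=[t_+,\sigma]$ after stabilising to a common level; because $F=\mathrm{Frac}(\mathcal F)$ and $\mathcal V$ is a colimit along $\mathcal F$, this is a well defined $R$-linear action. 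The form $\langle[t,\sigma],[t,\rho]\rangle$ is defined, after stabilising to a common level, by gluing $\sigma$ to the mirror image of $\rho$ along the common boundary and closing the diagram, yielding a link, i.e. an element of $R$; sesquilinearity and $\pi$-invariance hold because acting by a tree on both sides of the gluing cancels, and the identity $\langle\xi,\eta\rangle=\overline{\langle\eta,\xi\rangle}$ is exactly the mirror symmetry of the glued diagram. Unwinding these definitions for $g=t_+t_-^{-1}$ gives $\pi(g)\Omega=[t_+,\Phi(t_-)]$ and $\Omega=[t_+,\Phi(t_+)]$, so $\langle\pi(g)\Omega,\Omega\rangle$ is the tangle $\Phi(t_-)$ glued to the mirror of $\Phi(t_+)$ and closed up, which is precisely the two trees joined by the standard dashed-line closure, namely the drawn diagram.

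The real content, and the step I expect to be the main obstacle, is the last assertion: that \emph{every} link in $\mathbb R^3$ occurs as such a matrix coefficient. For this I would take an arbitrary diagram $D$ of a given link and isotope it into a standard rectangular position adapted to a dyadic subdivision, arranging that its planar shadow and its over/under data are carried by a pair of binary trees together with the standard closure. The crossing built into $\tau$ lets an appropriate choice of trees realise an arbitrary crossing pattern, while the freedom in the two trees realises an arbitrary planar connectivity; the crux is to show that \emph{any} diagram can be deformed into this ``standing on a pair of trees'' normal form without changing the link type, which is exactly the construction carried out in \cite{jo4}.
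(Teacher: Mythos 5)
Your construction half follows the same directed-system/group-of-fractions strategy as the paper, but it contains two concrete errors. First, parity: there is no Conway tangle from $1$ point to $2$ points, since the strings of a tangle pair up its boundary points, so a nonempty morphism set from $m$ to $n$ requires $m+n$ even. Hence the monoidal functor you posit, with objects $n\mapsto n$ and the binary caret sent to a $(1,2)$-tangle $\tau$ ``containing a crossing'', does not exist. This is exactly why the paper works with \emph{ternary} forests: the functor $\Phi:\mathcal F_3\to\mathfrak C$ sends a ternary vertex to a crossing viewed as a $(1,3)$-tangle, and $F=F_2$ is then handled through the embedding $\phi:F_2\hookrightarrow F_3$ that adds a middle leaf at every vertex; those middle strands are precisely the dashed lines in the statement's picture. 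Second, your justification of invariance of the form --- that ``acting by a tree on both sides of the gluing cancels'' --- is false as stated: gluing the vertex tangle to its mirror image and applying a type II Reidemeister move leaves behind a split unknotted circle, so stabilising by a forest with $p$ vertices multiplies the pairing by $\delta^p$, where $\delta$ is the class of the unknot, which is not $1$ in $R$. The paper must adjoin $\sqrt\delta$ and renormalise $\Phi(f)$ by $(1/\sqrt\delta)^p$ precisely so that the connecting maps $\iota_s^t$ preserve $\langle\,,\rangle$; without this (or an equivalent normalisation) your form does not descend to the direct limit.

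The larger gap is the final assertion, which you yourself call the real content: that every link arises as such a coefficient. Your text for this step is a restatement of the goal (``isotope into a standard rectangular position\dots the crux is to show that any diagram can be deformed into this normal form'') followed by a citation of \cite{jo4}; in a blind proof that is not an argument. The paper's proof (of theorem \ref{alexander}, which covers the statement since the argument lands in $F_2<F_3$) has genuine mechanism: pass to the chequerboard-shaded, edge-signed planar graph $\Gamma(L)$, note that this passage is reversible and that graphs of the form $\Gamma(L(g))$ are exactly those consisting of two rooted planar/linear trees glued along the $x$-axis with the correct signs; then isotope all vertices of $\Gamma(L)$ onto the $x$-axis, repair edges crossing the axis, correct all wrong signs by a local type II Reidemeister trick, and force ``each vertex hit exactly once from the left'' by further type I and II moves, producing such a pair of trees without changing the underlying link. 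Nothing in your proposal supplies these moves or any substitute for them, so the surjectivity claim remains unproven.
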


Since then this construction has been better understood, considerably simplified and generalised, though admittedly at the
cost of geometric understanding.  In this largely expository paper we will first describe the new simplified version of the construction with a
few new examples of actions of Thompson groups. We will then explain the particular context that leads to the theorem above-further
simplified by the use of the Thompson groups $F_3$ and $T_3$ rather than $F_2$ and $T_2$. Finally we will list a few  obvious  questions that 
remain open at this stage. 

\section{The directed set/functor method.}\label{construction}
A planar $k$-forest is the isotopy class of a disjoint union of  planar rooted trees all of whose vertices are adjacent to 
$k+1$ edges, embedded
in $\mathbb R^2$ with roots lying on $(\mathbb R,0)$ and  leaves lying on $(\mathbb R,1)$. The isotopies preserve the strip $(\mathbb R,[0,1])$ but may act nontrivially on the boundary. Planar $k$-forests form a category
in the obvious way with objects being $\mathbb N$ whose elements are identified with isotopy classes of sets of points on a line and whose morphisms are the  planar $k$-forests themselves,  which can be composed by stacking a forest in 
$(\mathbb R,[0,1])$ on top of another, lining up the leaves of the one on the bottom with 
the roots of the other by isotopy then rescaling the $y$  axis to return
to a forest in  $(\mathbb R,[0,1])$. 

We will call this category $\mathcal F_k$.

\iffalse
\begin{definition}\label{genforest} Fix $n\in \mathbb N$. For each $i=1,2,\cdots,n$ let $f_i$ be the planar
binary forest with $n$ roots and $n+1$ leaves consisting of straight
lines joining $(k,0)$ to $(k,1)$ for $1\leq k\leq i-1$ and $(k,0)$ to 
$(k+1,1)$ for $i+1\leq k\leq n$, and a single binary tree with root
$(i,0)$ and leaves $(i,1)$ and $(i+1,1)$ thus:
[]

\end{definition}

Note that any element of $\mathcal F$ is in an essentially unique way a 
composition of morphisms $f_i$, the only relation being $\Phi(f_j)\Phi(f_i)=\Phi(f_i)\Phi(f_{j-1}) \mbox{   for  } i<j-1$.

\fi
The set of 
morphisms from $1$ to $n$ in $\mathcal F_k$ is the set  of $k$-ary planar rooted trees $\mathfrak T_k$ and is a \emph{directed set} with $s\leq t$ iff there is and $f\in \mathcal F$ with $t=fs$.

It is useful to know the number of $k$-ary planar rooted trees.

\begin{proposition} \label{fc1} There are $FC(k,n)$ $k$-ary planar rooted trees with $n$ vertices where $FC(k,n)$ is the
Fuss-Catlan number $\D \frac{1}{(k-1)n+1}\binom{kn}{n}$.

\end{proposition}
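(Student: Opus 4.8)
The plan is to enumerate these trees by a recursive decomposition, pass to a generating function, and extract the coefficients by Lagrange inversion. First I would record the structural recursion. A morphism from $1$ to $m$ in $\mathcal F_k$ is precisely an element of $\mathfrak T_k$, and its ``vertices'' are the branch points, each incident to one edge toward the root and $k$ edges toward the leaves. Such a tree is either the trivial tree with no branch vertex, or it arises from a single root branch vertex to which an ordered $k$-tuple of $k$-ary trees is attached as subtrees, in left-to-right order. Writing $t_n$ for the number of trees with exactly $n$ vertices and $T(x)=\sum_{n\ge 0} t_n x^n$, this decomposition yields at once the functional equation
\[
T(x) = 1 + x\,T(x)^k,
\]
where the factor $x$ records the root vertex and $T(x)^k$ its $k$ ordered subtrees. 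The base values $t_0 = t_1 = 1$ agree with $FC(k,0)=FC(k,1)=1$.

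Next I would extract $t_n=[x^n]T(x)$. Setting $w = T - 1$ turns the equation into $w = x(1+w)^k$, which is exactly the hypothesis of Lagrange inversion with $\phi(w) = (1+w)^k$ and $\phi(0)=1\neq 0$. For $n \ge 1$ this gives
\[
t_n = [x^n]\,w = \frac{1}{n}\,[w^{n-1}]\,\phi(w)^n = \frac{1}{n}\,[w^{n-1}](1+w)^{kn} = \frac{1}{n}\binom{kn}{n-1},
\]
and a routine factorial manipulation rewrites this in the desired form,
\[
\frac{1}{n}\binom{kn}{n-1} = \frac{(kn)!}{n!\,\bigl((k-1)n+1\bigr)!} = \frac{1}{(k-1)n+1}\binom{kn}{n} = FC(k,n),
\]
while the case $n=0$ is the trivial tree, matching $FC(k,0)=1$.

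I do not expect a genuine obstacle: once the functional equation is in hand, the rest is a mechanical application of Lagrange inversion and factorial bookkeeping. The only points demanding care are conceptual rather than computational: that ``vertex'' must be read as branch point (each incident to $k+1$ edges), since this is what fixes the exponent $k$ in $T=1+xT^k$ and hence the parameter of the Fuss-Catalan number; and that a tree with $n$ branch vertices has $(k-1)n+1$ leaves, so that it is genuinely a morphism $1\to (k-1)n+1$ in $\mathcal F_k$. For readers preferring a bijective argument, the same count follows from the cycle lemma applied to the preorder (\L{}ukasiewicz) encoding of the tree, in which each branch vertex contributes a step $+(k-1)$ and each leaf a step $-1$: exactly one of the $kn+1$ cyclic rotations of such a word is a valid tree code, giving $\frac{1}{kn+1}\binom{kn+1}{n}$, which is again $FC(k,n)$.
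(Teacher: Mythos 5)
Your proof is correct, and it starts from exactly the same combinatorial step as the paper: deleting the root vertex decomposes a $k$-ary tree into an ordered $k$-tuple of subtrees, giving $t_{n+1}=\sum_{\ell_1+\cdots+\ell_k=n}\prod_{i=1}^{k}t_{\ell_i}$, which is precisely the paper's displayed recursion (up to an evident typo there, where the product should run to $k$, not $n$). The two arguments diverge in how this recursion is tied to the closed form: the paper stops at the recursion and cites \cite{concrete} for the fact that the Fuss--Catalan numbers satisfy it, whereas you encode the recursion as the functional equation $T=1+xT^{k}$ and solve it by Lagrange inversion; your extraction $t_n=\frac{1}{n}\binom{kn}{n-1}$ and the factorial identity $\frac{1}{n}\binom{kn}{n-1}=\frac{(kn)!}{n!\,((k-1)n+1)!}=\frac{1}{(k-1)n+1}\binom{kn}{n}$ are both right, as are the base cases. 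What each approach buys: the paper's version is a two-line reduction that defers the arithmetic to a standard reference, while yours is self-contained modulo Lagrange inversion itself, and it also makes explicit the reading of ``vertex'' as branch point incident to $k+1$ edges (hence $(k-1)n+1$ leaves), which the paper leaves implicit. Your closing cycle-lemma sketch is a genuinely different, citation-free route that bypasses the recursion and generating function altogether; if you ever promote it from a remark to the actual proof, you should add the observation that every rotation class of such a preorder word has full size $kn+1$ (the step sum $-1$ is coprime to the word length $kn+1$, so the word cannot be periodic), which is what legitimizes the division by $kn+1$.
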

\begin{proof} By attaching $k$ new trees to the root vertex we see that the number of  $k$-ary planar rooted trees with $n$ vertices satisfies the same recursion relation:
 $$ FC(k,n+1)=\sum _{\ell_1,\ell_2,\cdots \ell_k,\sum \ell_i=n} \prod _{i=1}^n FC(k,\ell_i).$$
 See \cite{concrete} for details about the Fuss-Catalan numbers.
\end{proof}
Given a functor $\Phi:\mathcal  F\rightarrow \mathcal C$ to a category $\mathcal C$ whose objects are sets, 
we define the direct system $S_\Phi$ which associates to each $t  \in \mathfrak T$, $t:1\rightarrow n$,
 the set $\Phi(target(t))=\Phi(n)$. 
 For
each $s\leq t$ we need to give $\iota_s^t$. For this observe 
that there is an $f\in \mathcal F$ for which $t=fs$ so we
define
 $$\iota_s^t =\Phi(f)$$
 which is an element of $Mor_{\mathcal C}(\Phi(target(s)),\Phi(target(t)))$ as required. The $\iota_s^t$ trivially
 satisfy the axioms of a direct system.
 
  As a slight variation on this theme, given a functor $\Phi:\mathcal  F\rightarrow \mathcal C$ to \underline{any} category $\mathcal C$, 
and an object $\omega \in \mathcal C$, form the category $\mathcal C^\omega$ whose objects are the sets $Mor_{\mathcal C}(\omega,obj)$ for every 
object $obj$ in $\mathcal C$, and
whose morphisms are composition with those of $\mathcal C$. The definition of the functor $\Phi^\omega:\mathcal F\rightarrow \mathcal C^\omega$
is obvious.
  Thus the direct system $S_{\Phi^\omega}$ associates to each $t  \in \mathfrak T$, $t:1\rightarrow n$,
 the set $Mor_{\mathcal C}(\omega,\Phi(n))$. Given $s\leq t$ let  $f\in \mathcal F$ be such that $t=fs$.Then
for $\kappa \in Mor_{\mathcal C}(\omega,\Phi(target(s)))$,
 $$\iota_s^t(\kappa) =\Phi(f)\circ \kappa$$
 which is an element of $Mor_{\mathcal C}(\omega,\Phi(target(t)))$. 
 
 As in \cite{nogo} we consider the direct limit:
$$ \underset{\rightarrow} \lim S_\Phi=\{(t, x) \mbox{ with } t \in  \mathfrak T, x\in \Phi(target(t))\} / \sim$$ 
where $(t,x)\sim (s,y)$ iff there are $r\in \mathfrak T, z\in \Phi(target(z))$ with $t=fr, s=gr$ and $\Phi(f)(x)=z=\Phi(g)(y)$.
\vskip 5pt
\emph{We use $\displaystyle {t \over x}$ to denote the equivalence class of $(t,x)$ mod $\sim$.}
\vskip 5pt
The limit $ \underset{\rightarrow} \lim S_\Phi$ will inherit structure from
the category $\mathcal C$. For instance if the objects of $\mathcal C$ are Hilbert spaces and the morphisms are isometries then 
$ \underset{\rightarrow} \lim S_\Phi$ will be a pre-Hilbert space which may be completed to a Hilbert space which we will also call the direct
limit unless special care is required.
\iffalse
Note that this is a slight modification of the definition of \cite{nogo} where
$S_\Phi(t)$ was $Mor(\Phi(e),\Phi(t))$. This was necessary in \cite{nogo} to
make $\Phi(t)$ a \emph{set} since we were dealing with abstract tensor
categories. Sometimes $Mor(\Phi(e),\Phi(t)) $  and $\Phi(n)$ can
be naturally identified in which case the definition is the same as in \cite{nogo}. Such is the case for the identity functor from $\mathcal F$ to itself. 
\fi

As was observed in \cite{nogo}, if we let 
%has the required properties so that $ \underset{\rightarrow} \lim S_\Phi$ is, when 
$\Phi$ be the identity functor and choose $\omega$ to be the tree with one leaf,
then  the inductive limit consists of equivalence classes of pairs $\D \frac{t}{x}$ where $t\in \mathcal T$ and $x\in \Phi(target(t))=Mor(1,target(t))$.
But $Mor(1,target(t))$ is nothing but $s\in \mathcal T$ with $target(s)=target(t)$, i.e. trees with the same number of leaves as $t$.
Thus the inductive limit is nothing but the (Brown-)Thompson group $F_k$ with group law $${r\over s}{s\over t}={r\over t}.$$ 

For instance the following tree fraction gives an element of $F_3$:

$$ \vpic {f3} {1in}  $$
It is part of the philosophy of this paper to think of such an element also as the diagram obtained by flipping the numerator 
upside down and attaching its leaves to those of the denominator so that, for instance, the above element can equally be 
written $$ \vpic {f3alt} {1in}  $$

(Geometrically this makes sense-$F_3$ is a group of piecewise linear homeomorphisms constructed from intervals which are
sent by scaling transformations to other intervals. If we associate an interval with each leaf of a tree by the rule that
each vertex of the tree splits an interval up into three adjacent intervals of equal width, then the element of $F_3$ given
by a diagram as above maps an interval of a leaf of the bottom tree to the interval of the leaf of the top tree to which it
is attached.)
%the Thompson group $F$, which is thus the \emph{group of fractions} of $\mathcal F$. The element $a\in Mor(e,target(b))$ is written $\displaystyle \frac{a}{b}$.

 With this definition of the group we may construct actions in a simple way. For any functor $\Phi$,
$ \underset{\rightarrow} \lim S_\Phi$ carries a natural action of $F_k$ 
defined as follows:
$$\frac{s}{t}(\frac{t}{x})=\frac{s}{x}$$ where $s,t\in \mathfrak T_k$ with $target(s)=target(t)=n$ and $x\in \Phi(n)$.  A Thompson group element given as a pair of trees with $m$ leaves, and an element of
 $ \underset{\rightarrow} \lim S_\Phi$ given as a pair (tree with $n$ leaves,element of $\Phi(n)$), may not be immediately composable by the above forumula, but they can always be ``stabilised'' to be so within their equivalence classes. 
 
 The Thompson group action preserves the structure of 
 $ \underset{\rightarrow} \lim S_\Phi$ so for instance in the Hilbert space case the representations are unitary.

 \section{The connection with knots.}\label{knots}
 
 A Conway tangle is an isotopy class of rectangles with $m$ ``top'' and $n$ ``bottom'' boundary points, containing smooth curves called
 strings with under and over crossings which meet the boundary transversally in the $m+n$ boundary points. The isotopies are 
 considered to contain the three Reidemeister moves but must fix the top and bottom edges of the rectangles.
 Conway tangles form a category whose objects are $0\cup \mathbb N$ with the non-negative integer $n$ being identified with
 isotopy classes of sets of $n$ points in an interval. The morphisms are then the Conway tangles with the obvious stacking 
 as composition: 
 
 $$\mbox{A morphism in $\mathfrak C$ from 3 to 5}: \qquad \vpic {tangle} {2in} $$
 
 Of course the set of morphisms from $m$ to $n$ is empty if $m+n$ is odd.
 
 \begin{definition}
 The Conway tangles defined above will be called the category of tangles $\mathfrak C$.
 \end{definition}
 
 We will apply the construction of the previous section to the ternary Thompson group $F_3$ to obtain actions of it on
 spaces of tangles. We distinguish three slightly different ways to do this.
 \begin{enumerate}

\item  Set theoretic version: 

To perform the construction of the previous section we need to define a functor from ternary forests to $\mathfrak C$. 
\begin{definition} The functor $\Phi: \mathcal F_3\rightarrow \mathfrak C$ is defined as follows:
\5
a) On objects  $\Phi (n)=n$ so that the roots of a planar forest  are sent to the boundary points at the bottom of a tangle, from
left to right.
\5
b) On morphisms (i.e. forests), $\Phi (f)$ is defined to be the tangle obtained by isotoping the forest to be in a rectangle with roots on 
the bottom edge and leaves on the top edge,  and  replacing each vertex of the forest with a crossing thus:

$$ \vpic {phidef}  {2in} $$

\end{definition}

Thus for instance if $f$ is the forest \vpic {forest} {1.5in}  then $$\Phi(f)= \vpic {imagetangle} {1.5in} $$

The well definedness and functoriality of $\Phi$ are obvious. 
\5 
By the machinery of the previous section we obtain an action of $F_3$ on a set $\tilde {\mathfrak C}$, the direct limit of sets of tangles.
An element of the set $\tilde {\mathfrak C}$ is the equivalence class of a pair $(t,T)$ where $t$ is a ternary planar rooted tree with $n$
leaves and $T$ is a $(1,n)$ Conway tangle. Adding a single vertex to $t$ corresponds to adding a single crossing to $T$ and this
generates the equivalence relation. Thus an element of $\tilde {\mathfrak C}$ can be thought of as an infinite tangle with one
boundary point at the bottom and eventually ending up with a lot of simple crossings as below:

$$\vpic {infinite} {3in} $$

The original geometric intuition of our construction was to think of a Thompson group element as giving a piecewise linear
foliation of a rectangle attaching points at the bottom to their images on the top, and stacking it on top of the above picture to
give a new such picture. See \cite{jo2}.
\item Linearised version:
\5
Recall that  $R$ is  the ring of formal linear combinations (over $\mathbb Z$) of (three dimensional) isotopy classes of unoriented links with distant union as multiplication.
An unoriented link acts on a tangle simply by inserting a diagram for it in any region of the tangle. 

One may alter the  above construction by replacing the set of $(1,2k+1)$ tangles by the free $R$-module 
$R\mathfrak C_{1,k}$ having
those tangles as a basis. This way the direct limit $V$ is also an $R$-module. More importantly mirror image defines
an involution on $R$ and there is a sesquilinear form $\langle S,T\rangle$ on each $R\mathfrak C_{1,k}$ obtained by relecting 
the tangle $T$ about the top side of its rectangle, then placing it above $S$ and connecting all the boundary points in
the obvious way to obtain an element of $R$. 

Unfortunately the connecting maps $\iota_s^t$ of the direct system do not preserve $\langle,\rangle$. But this is easily
remedied by adjoining a formal variable $\sqrt \delta$ and its inverse to $R$ to obtain $R[\displaystyle \sqrt \delta,\frac{1}{\sqrt \delta}]$. One then modifies the functor $\Phi$ by multiplying the $R[\displaystyle \sqrt \delta,\frac{1}{\sqrt \delta}]$-linear map induced by $\Phi$ in (i) above (by its action on a the basis of tangles)
by $(\frac{1}{\sqrt \delta})^p$ where $p$ is the number of vertices in the forest. 
Then the connecting maps preserve the sesquilinear form which thus passes to a sesquilinear form on the direct limit
$R[\displaystyle \sqrt \delta,\frac{1}{\sqrt \delta}]$-module which is tautologically preserved by the action of $F_3$. To 
simplify notation we will continue to use $R$ for $R[\displaystyle \sqrt \delta,\frac{1}{\sqrt \delta}]$.

Now we are finally at the interesting bit.  Given a  representation of a group $G$ ,on an $R$-module $V$, $g\mapsto u_g$, preserving a sesquilinear form $\langle,\rangle$, the \emph{coefficients} of the representation are the functions $$g\mapsto \langle u_g(\xi),\eta\rangle$$ as $\xi$ and$\eta$ vary in $V$.  But our construction of the direct limit gives us a privileged
vector in $V$, namely the equivalence class of the $(1,1)$ tangle $\omega$ consisting of a single straight string connecting the boundary points of a rectangle
with two boundary points altogether. We will call this vector $\Omega$. If we want to think of $\Omega$ as an element
of $V$ thought of as an infinite tangle, it is just: $$ \Omega = \vpic {Omega} {2in} $$ (normalised by the appropriate 
power of $\delta$ for a given finite approximation).

Since $\Omega$ is a privileged vector, we would like to know the function on $F_3$ given by the coefficient $\langle u_g(\Omega),\Omega\rangle$,
$g$ being an element of $F_3$ and $u_g$ being the representation we have constructed. It is tempting to call the 
vector $\Omega$ the ``vacuum vector'' so that by analogy with physics (strengthened by the next section on
topological quantum field theory) we offer the following:

\begin{definition}
The element $\langle u_g(\Omega),\Omega\rangle$ of $R[\displaystyle \sqrt \delta,\frac{1}{\sqrt \delta}]$ is called 
the \emph{vacuum expectation value} of $g\in F_3$. (It is just a power of $\delta$ times a tangle.)
\end{definition}

It is not hard to calculate this element of $R$ if we follow the definitions carefully.

Let $\D g=\frac{s}{t}$ be an element of $F_3$ where $s$ and $t$ are planar rooted ternary trees with the same number of leaves.
$\Omega \in V$ is given by $\frac{1}{\omega}$ where $1$ is the tree with no vertices.
To calculate $u_g(\Omega)$ we need to stabilise $1$ so that we can apply the formula defining the representation.
Thus we write $\frac{1}{\omega}=\frac{t}{\Phi(t)}$ (recall that $\Phi$ is defined on a tree by changing all the vertices to crossings). Thus by definitino $$u_g(\Omega)=\frac{s}{\Phi(t)}.$$
To evaluate the sesquilinear form we must write $\Omega$ in the form $\D \frac{s} {something}$ and clearly that something
is $\Phi(s)$. Thus the coefficient is obtained by attaching $s$ to an upside down copy of $t$,  joining the top vertex to
the bottom one and replacing vertices by crossings, thus:
\5\5
If $s=\vpic {s} {0.6in} $ and $t=\vpic {t} {0.8in} $ then
 $$\langle u_g \Omega, \Omega\rangle = \frac {1}{\delta^2}\hspace{10pt} \vpic {coefficient} {1in} $$
 
 The factor $\frac{1}{\delta^2}$ comes innocently from the normalisation of the functor $\Phi$.
 The picture is fairly obviously a trefoil. 
 
 \begin{definition} \label{linkfromf3} If $g\in F_3$ we call $L(g)$ the link $\delta^n \langle u_g \Omega, \Omega\rangle$ for 
 the unique representation of $g$ as a pair of trees with a minimal number of vertices.
 \end{definition}
 Then we have:
 \begin{theorem}\label{alexander}
  
 Any knot or link can be obtained as $L(g)$ for some
 $g\in F_3$.
 \end{theorem}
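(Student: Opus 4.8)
The plan is to produce, for an arbitrary link $L$, an element $g=\frac{s}{t}\in F_3$ whose vacuum expectation value is a diagram of $L$. The first step is to unwind the definition of $L(g)$ exactly as in the trefoil computation above: the coefficient $\langle u_g\Omega,\Omega\rangle$ is obtained by stacking $\Phi(s)$ beneath a top-to-bottom reflected copy of $\Phi(t)$ and then joining the top root to the bottom root. Hence, up to the power of $\delta$ that Definition \ref{linkfromf3} strips off, $L(g)$ is precisely the closure of the $(1,1)$-tangle $\Phi(\bar t)\Phi(s)$ assembled from the elementary crossing gadget that $\Phi$ attaches to each ternary vertex. Since every link is the closure of \emph{some} $(1,1)$-tangle — cut any diagram of $L$ at a point of a strand — it suffices to show that every $(1,1)$-tangle is closure-equivalent to one of the form $\Phi(\bar t)\Phi(s)$.

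I would reduce this to a statement about braids using Alexander's theorem, which presents $L$ as the closure $\hat\beta$ of a braid $\beta$ on $k$ strands. The realization then splits in two. First, using the elementary one-vertex forests and their images under $\Phi$, placed at an arbitrary horizontal position by padding with trivial strands, I would build the braid generators $\sigma_i^{\pm 1}$ together with the identity on the remaining strands, and then concatenate them to realize the full word for $\beta$ as a sub-tangle of some $\Phi(\bar t)\Phi(s)$. Second, I would verify that the root-to-root and leaf-to-leaf identifications carried out by the vacuum expectation value implement exactly the Markov (trace) closure, so that the resulting closed diagram is $\hat\beta=L$.

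In order, the key steps are: (i) record the two-string tangle that $\Phi$ assigns to a single ternary vertex and track how stacking vertices threads a strand past the accompanying cap or cup; (ii) produce from these gadgets a crossing of \emph{each} over/under type between adjacent strands, plus the identity elsewhere; (iii) translate a braid word for $\beta$ into a ternary tree $s$, choosing $t$ to supply the complementary caps, so that $\Phi(\bar t)\Phi(s)$ is isotopic to $\beta$ closed up; and (iv) appeal to Alexander's theorem to conclude that every $L$ arises, finally passing to the minimal pair-of-trees representative, which alters $L(g)$ only by the normalising power of $\delta$ and not the underlying link.

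The hard part will be step (ii). The functor $\Phi$ supplies a single fixed gadget per vertex, so it is not a priori clear that both crossing types — needed for a general braid word, and hence for non-positive links — are attainable. I expect to resolve this by exploiting precisely the feature that makes $F_3$ more convenient than $F_2$: the ternary gadget carries an extra strand, which can be used to route one arc over or under its neighbour and so deliver the opposite crossing after a Reidemeister-II cancellation, or, failing that, by using the freedom in the complementary tree $t$ to reflect a local tangle and interchange its crossings. Establishing this flexibility, and checking that the induced strand permutations and caps assemble into a genuine braid rather than a more general tangle, is where the real work lies; the remaining steps are bookkeeping layered on top of Alexander's theorem.
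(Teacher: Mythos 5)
Your unwinding of $L(g)$ as the closure of the $(1,1)$-tangle $\Phi(\bar t)\Phi(s)$ is correct, but the braid-theoretic route you build on it has a genuine gap, and it sits exactly where you admit the ``real work'' lies. First, the closure performed by the vacuum expectation value is not the Markov (trace) closure: since strands cannot split, the image under $\Phi$ of each ternary vertex is a crossing inseparably welded to a cup (in $\Phi(s)$) or a cap (in the reflected $\Phi(\bar t)$), so $\Phi(\bar t)\Phi(s)$ is a plat-like object with a single through-strand, and the root-to-root identification closes one strand, not $k$ parallel ones. Second, and more seriously, all crossings occurring in $\Phi(s)$ are of one fixed type, all those in $\Phi(\bar t)$ are of the mirror type, and none of them exists independently of its cup or cap; you give no mechanism for producing a crossing of \emph{either} sign between two through-strands at an arbitrary horizontal position, which is what realizing a general braid word $\sigma_{i_1}^{\epsilon_1}\cdots\sigma_{i_m}^{\epsilon_m}$ requires. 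Your two proposed fixes (routing the extra strand of the gadget, or reflecting a local tangle via the choice of $t$) are hopes, not constructions; the crossing-type/sign problem you have run into is precisely the difficulty the paper isolates in Remark \ref{pain}, and resolving it is essentially the entire content of the theorem, not a technical step to be layered on top of Alexander's theorem.

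For comparison, the paper's proof avoids braids and Alexander's theorem altogether. It passes to the chequerboard signed planar (Tait) graph $\Gamma(L)$, observes that diagrams of the form $L(g)$ are exactly those whose Tait graph is a pair of rooted planar/linear trees glued along the $x$-axis with edge signs governed by the direction of travel away from the root, and then massages an arbitrary connected $\Gamma(L)$ into this shape by explicit local moves: put all vertices on the $x$-axis, subdivide edges that cross the axis, correct each wrong sign by a type-II Reidemeister move, and force every vertex to be ``hit exactly once from the left'' by type-I and type-II moves, each move visibly preserving the underlying link. If you wish to salvage your plan, you would need a lemma asserting that $\sigma_i^{\pm 1}$ together with identity strands can be realized, up to isotopy and the closure operation, inside tangles of the form $\Phi(\bar t)\Phi(s)$ — but proving that lemma amounts to redoing the paper's sign-correction argument in a more constrained setting, so as it stands your step (ii) is unproved and the proposal does not go through.
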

 
% Note that there is no ambiguity in the power of $\delta$ if we are actually given the pair $(s,t)$ in its 
 %equivalence class, thus if we assume there are no cancelling carets, the link is an invariant of $\displaystyle \frac{s}{t}$.
 
 These vacuum expectation values are inherently unoriented.  There are two ways to handle oriented links, the most
 powerful of which is presented in \cite{homflypt}. But the easiest way is to use the following.
 \begin{definition} \label{oriented}
 
Let  $\overset{\rightarrow}{F_3}<F_3$ be the subgroup of elements
for whose pair of trees presentation the chequerboard shading gives a Seifert surface.
\end{definition}

For $F_2$ this subgroup was identified in \cite{gs} as being isomorphic to $F_3$! See also \cite{Ren}.

\item Skein theory version. \label{skein}

In the simple linearised version one may easily specialise $\delta$ ($\neq 0$) and use the complex numbers as
coefficients. But each approximating space to the inductive limit is infinite dimensional. This can be remedied 
by taking a skein theory relation (\cite{conway},\cite{Kff2}) and applying it to the approximating vector spaces spanned by tangles.
This is entirely compatible with the Thompson group action. The vacuum expectation value will then be just
the link invariant of the skein theory for the link $L(g)$ of definition \ref{linkfromf3}. Since we are dealing with
an unoriented theory the skein theories will have to be unoriented also and we will have to play the usual
regular isotopy game. Indeed the vacuum expectation value will be an invariant of regular isotopy if we use
the Kauffman bracket or the Kauffman polynomial. Moreover the diagram for $L(g)$ can be considered up to
regular isotopy. Since the proof below of the realisation of all links as $L(g)$ actually uses a lot of type I
Reidemeister moves, one may ask whether all \emph{regular isotopy classes} of link diagrams actually arise as
$L(g)$.
\item TQFT version:  We may ``apply a (unitary) TQFT'' at any stage in the above procedures, provided it is unoriented.
This means that the approximating subspaces for the direct limit are finite dimensional Hilbert spaces and the 
connecting maps $\iota_s^t$ are isometries so the direct limit vector space is a pre-Hilbert space on which the 
Thompson group acts by isometries so we can complete and obtain a \emph{unitary} representation of 
the Thompson group. 

The vacuum expectation values of the unitary representation can then always be calculated as statistical
mechanical sums as in \cite{J9}.
\end{enumerate}

\section{Relationship with the original construction- proof of theorem \ref{alexander}.}

It is possible to understand the construction of \cite{jo4}, which we gave in the introduction, in terms of a natural embedding $\phi$ of
$F_2$ in $F_3$. Take a binary rooted planar tree and simply attach another leave to the middle of each vertex thus:

$$g= \vpic {f2} {1.5in}  \mapsto \phi(g)=\qquad \vpic {f3} {1.5in} $$

By construction $\langle u_{\phi(g)} \Omega,\Omega\rangle$ is the same as the coefficient of $g\in F_2$ defined in \cite{jo4}. When
there is no ambiguity we will identify $F_2$ with $\phi(F_2)$ as a subgroup of $F_3$.

Since $F_3$ is much bigger than $F_2$ it should be possible to find a  simpler proof of the ``Alexander'' theorem \ref{alexander} that all 
links can be obtained as vacuum expectation values for elements of $F_3$. We will see that, if we try to imitate the proof
of \cite{jo4} we run into a problem with signs so that the proof of the weaker theorem seems harder than that of the stronger one!
So we will sketch a slightly improved version of the proof of \cite{jo4}, pointing out the difference between the $F_2$ and $F_3$
cases.

Proof of theorem \ref{alexander}:
\begin{proof} Given a link diagram $L$ for an unoriented link $L$ we start by forming the edge-signed planar graph 
$\Gamma (L)$ given by a chequerboard shading of $L$
as usual, thus:

$$ L= \quad \vpic {tomedial} {3in}  \quad =\Gamma(L) $$ 

Where we have adopted the sign convention of \cite{jo4}:

$$  \vpic {signalt} {2in}   $$ 
This process may be extended to Conway tangles, moving the vertices for boundary-touching faces to the boundary thus:

$$T= \quad \vpic {tangleshade} {3in}  =\Gamma(T) $$

It is important to note that this process is \emph{reversible}, a tangle can be obtained from any planar graph $\Gamma$
with some vertices on the boundary, by putting little crossings in the middle of the edges of $\Gamma$ and connecting them up around the faces of $\Gamma$ with the faces meeting the boundary having two points on the boundary rather than a
little crossing. (Or equivantely define the tangle as the intersection of some smooth disc with the link defined by $\Gamma$.)
This means that the map from Conway tangles to planar graphs in a disc is injective.

Observe that if the link diagram is of the form $L(g)$ (see \ref{linkfromf3}) then $\Gamma(L)$ has a special form, e.g.

$$ \vpic {shadedf3} {4in}  $$

Observe that the diagram consists of two (not necessarily ternary) trees, one above and one below the line where the two 
(ternary) trees of $g$ meet.
The strategy of the proof is to take a link diagram $L$ and modify it by planar isotopies and Reidemeister moves 
so that $\Gamma(L)$ looks like a graph as above. 

\begin{definition} A \emph{rooted planar/linear $n$-tree} will be the isotopy class of a planar tree with all 
vertices being points on the $x$-axis, the isotopies being required to preserve the $x$ axis.
The root is then the leftmost point on the straight line which might as well be taken as $0$.
\end{definition}

We see that an element  $g\in F_3$ (defined as a pair of ternary trees) has trees $T^+$ and $T^-$ above and below
the $x$ axis respectively with the $x$ axis as boundary. The signed graphs $\Gamma(T^+)$ and $\Gamma(T^-)$ are both
rooted planar/linear $n$-trees where the trees defining $g$ have $2n-1$ leaves.

\begin{remark} \label{pain} \rm{Here there is a new pheomenon compared to the $F_2$ case of \cite{jo4}.
If $g\in F_2<F_3$ (and there are no cancelling carets) all the edges of $\Gamma(T^+)$ have a plus sign and all the edges of $\Gamma(T^-)$ have a minus sign. This is no longer true for a general element of $F_3$. In fact each edge of $\Gamma(T^+)$ and $\Gamma(T^-)$ can be  orientated pointing away from the root of the tree. For $\Gamma(T^+)$, if the $x$ co-ordinate of the first vertex of the edge is less than the  $x$ co-ordinate of the second then the sign of the edge is plus, and in the opposite case it is minus. And the other way round for $\Gamma(T^-)$.}

(*)Elements of $F_2$ are characterised by the fact that the $x$ coordinate increases along edges.
\end{remark}

\begin{proposition}\label{fc2}There are $FC(3, n-1)=\D\frac{1}{2n-1}\binom{3n-3}{n-1}$ rooted planar/linear $n$-trees with $n$ vertices.
\end{proposition}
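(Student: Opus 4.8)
The plan is to recognise a rooted planar/linear $n$-tree as nothing more than a \emph{non-crossing tree} on $n$ vertices in convex position, and then to enumerate such trees. Since the isotopies are required to fix the $x$-axis, an embedded tree with its $n$ vertices on that axis is determined up to isotopy by the purely combinatorial data of which of the linearly ordered vertices $v_1<v_2<\cdots<v_n$ are joined, each edge being an arc in (say) the upper half-plane; planarity forces the non-crossing condition that no two edges $\{v_a,v_c\}$, $\{v_b,v_d\}$ with $a<b<c<d$ occur together, and conversely every such non-crossing edge set admits a unique embedding (the arcs nest according to their endpoints). Thus rooted planar/linear $n$-trees are exactly the non-crossing trees on $n$ collinear (equivalently convex-position) points, rooted at the leftmost one.

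The enumeration of non-crossing trees on $n$ vertices is classical and gives exactly $\frac{1}{2n-1}\binom{3n-3}{n-1}=FC(3,n-1)$, and I would obtain it in the spirit of Proposition \ref{fc1} by exhibiting a bijection with ternary planar rooted trees having $2n-1$ leaves, i.e. $n-1$ internal vertices. The natural such bijection is the map already at work in this section: send a ternary tree $T$ to $\Gamma(\Phi(T))$, the Tait (chequerboard) graph of the tangle $\Phi(T)$ with its vertices pushed onto the $x$-axis. Here each of the $n-1$ internal vertices of $T$ becomes a crossing and hence one of the $n-1$ edges of the linear tree, while the shaded regions become its $n$ vertices, so the cardinalities match; moreover $\Phi$ is injective on trees and $\Gamma$ is reversible on tangles (both noted above), so $T\mapsto\Gamma(\Phi(T))$ is at least injective. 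An equally good self-contained route is to decompose a non-crossing tree at its root and read off the generating function $y=\sum_{n\ge 1}t_n z^{n-1}$, which can be shown to satisfy $y=1+zy^{3}$, exactly the Fuss--Catalan recursion underlying Proposition \ref{fc1}.

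The main obstacle is surjectivity in the bijective route: one must check that applying the reverse construction $\Gamma^{-1}$ to an \emph{arbitrary} non-crossing tree always returns a tangle of the special ``tree'' form $\Phi(T)$, rather than some more general tangle, so that the inverse map is well defined. This is precisely where the normal form of $\Gamma(T^{+})$ and $\Gamma(T^{-})$ as two linear trees meeting along the axis, together with the sign bookkeeping flagged in Remark \ref{pain}, must be controlled. In the purely combinatorial route the corresponding delicate point is accounting for the vertices trapped beneath each arc issuing from the root, since it is exactly this nesting that produces the cubic term $zy^{3}$ rather than a quadratic one; once that decomposition is set up correctly, Lagrange inversion (or direct comparison with Proposition \ref{fc1}) yields $t_n=FC(3,n-1)$.
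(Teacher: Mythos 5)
Your second, purely combinatorial route is essentially the paper's own proof, so the substance of your proposal is correct; what you defer (``once that decomposition is set up correctly'') is exactly what the paper's proof consists of. The paper proves the count by the root decomposition you sketch: a linear tree with more than one vertex is uniquely the arc from the root $r$ to its outermost neighbour $v$, together with three smaller linear trees --- one attached at $r$ underneath that arc, one attached at $v$ underneath the arc (reflected, since $v$ is its rightmost vertex), and one attached at $v$ to its right. Non-crossing forces these three blocks to occupy disjoint consecutive stretches of the axis, which is precisely the ``nesting'' you identify as the source of the cubic term, and the resulting recursion is the $k=3$ instance of the Fuss--Catalan recursion in the proof of Proposition \ref{fc1}, i.e.\ your functional equation $y=1+zy^{3}$. (Your preliminary translation of the definition --- all arcs on one side of the axis, so that a rooted planar/linear $n$-tree is just a non-crossing tree on $n$ linearly ordered vertices rooted at the leftmost point --- is the reading the paper intends and uses for $\Gamma(T^{\pm})$; with arcs permitted on both sides the count would come out larger, so this interpretation is not optional.) By contrast, your first, bijective route via $T\mapsto\Gamma(\Phi(T))$ is not the paper's argument and, as you note yourself, is incomplete: surjectivity onto all non-crossing trees is exactly what is missing. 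Indeed the paper's logic runs the other way around --- it \emph{deduces} that rooted planar/linear $(n+1)$-trees and ternary $n$-trees are in bijection from Propositions \ref{fc1} and \ref{fc2} together with injectivity --- so founding the proof of \ref{fc2} on that bijection would invert the paper's reasoning and oblige you to supply the independent surjectivity argument you flag as the obstacle.
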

\begin{proof} We need to establish the recurrence relation in the proof of \ref{fc1} for $k=3$. Let $pl_n$ be
the number of rooted planar/linear $n$-trees and take a rooted planar/linear $n$-tree $t$ with $2$ vertices. Then  
given $3$ rooted planar/linear $n$-trees $t_1, t_2 $ and $t_3$ one may form another rooted planar/linear $n$-tree
by attaching $t_1$ to the right of the root of $t$, the reflection of $t_2$ in the $y$ axis to the left of the non-root vertex
of $t$ and $t_3$ to the right of the non-root vertex of $t$. Moreover any rooted planar/linear $n+1$-tree can be decomposed
in this way. Thus $pl_{n+1}=\D \sum_{\ell_1+\ell_2+\ell_3=n} pl_{\ell_1}pl_{\ell_2}pl_{\ell_3}$.
 
\end{proof}

By \ref{fc1}, \ref{fc2} and the injectivity of the map from trees to tangles, or directly, there are the same number of 
rooted planar/linear $n+1$-trees as there are ternary $n$-trees, and given two rooted planar/linear $n+1$-trees 
$\Gamma_\pm$ we can construct an element of $F_3$ by flipping $\Gamma_-$ upside down and attaching it
underneath $\Gamma_1$ to form a planar graph and signing all the edges according to whether their end points
have smaller or larger $x$ coordinate, we obtain a signed planar graph $\Gamma_+\cup \Gamma_-$ from which we get a link $L$ with
$\Gamma(L)=\Gamma_+\cup \Gamma_-$. 

 By remark \ref{pain}, if $g\in F_2$, any tree in $\Gamma_+\cup \Gamma_-$ that arose at this point were rooted planar/linear $n$-trees of a special kind-namely any vertex was 
connected by exactly one edge to a vertex to the left of it. Such trees are  counted by the usual Catalan numbers.

To prove theorem \ref{alexander}, we see that it suffices to find rooted planar/linear $n+1$-trees 
$\Gamma_\pm$ so that $\Gamma_+\cup \Gamma_-$ differs from $\Gamma(L)$ by planar isotopies and
Reidemeister moves.  We will give a slightly improved version of the argument of \cite{jo4} which will give us
elements of $F_2$. Surprisingly, we will only use Reidemeister moves
of types I and II.

Note that we may suppose the link diagram $L$ is connected so that $\Gamma(L)$ is too.

First isotope $\Gamma(L)$ so that all its vertices are on the $x$ axis. 
Unless there is a Hamiltonian path through the vertices of $\Gamma(L)$,  there will be edges of $\Gamma(L)$ the $x$ axis. Taking care of these edges  is very simple and is described in lemma 5.3.6 of \cite{jo4}, but some previous 
versions of \cite{jo4} are missing this point-near where the offending edge cross the $x$ axis, just add two vertices to
$\Gamma(L)$ on the $x$ axis and join them with an edge signed $\pm$  according to the sign of 
the offending edge. Continue to call this graph $\Gamma(L)$. See \cite{jo4}. 

At this stage we have a lot of the ingredients of an edge-signed rooted planar/linear graph of the form $\Gamma(L(g))$. 
$\Gamma(L)$ consists of two graphs, $\Gamma(L)^+$ and $\Gamma(L)^-$ in the upper and lower half-planes
respectively, with vertices all lying on the $x$ axis. The root is the vertex with smallest $x$ coordinate. We will make a
series of modifications and continue to call the graph $\Gamma(L)$ after each modification since it will represent the
same link.

The first thing we will take care of is the signs. Since we are trying to produce an element of $F_2$, we must end
up with all $\Gamma(L)^+$ signs positive and all $\Gamma(L)^-$ signs negative. There is no reason for this to be 
true.  But we may change $\Gamma(L)$ by type II Reidemeister moves so as to correct the bad signs one at a time.
Here is how-in the diagram below the dashed lines are edges of indetermate sign, the solid lines with no signs 
are positive edges if they are above the $x$ axis and negative if below, except for a solid line with a sign next to it
which is an edge with that sign.

$$ \vpic {tofixsign} {2in}  \qquad \rightarrow \vpic {signfixed} {2.5in} $$

Here we have started with a ``bad'' edge above the $x$ axis and changed the graph near one end of that edge. The two small
added edges in the lower half plane cancel (type II Reidemeister move) with the solid edge above to recreate 
the bad edge. The other two added edges just cancel to return the picture to its orignal form. Thus that part of
the graph $\Gamma(L)$ shown on the left gives the same link after being replaced by the figure on the right.

We now need to alter the graph so that if we orient the edges away from the root then the $x$ coordinate of their
sources are less than that of their targets. We can say this informally as : ``each vertex is hit exactly once from 
the left'', top and bottom. If we do this with local changes in accordance with out convention that edges in the
upper and lower half planes are positive and negative respectively, we will be done.

First let us make sure that every (non root) vertex is hit from the left, top and bottom. If there is one that is not, simply join
it to its neighbour on the left with a pair of cancelling edges thus:
$$ \vpic {hitfromleft} {1.8in} \qquad \rightarrow \qquad \vpic {hitfromleft2} {2.5in}  $$
(Recall that the edges all get their signs from being in the upper or lower half plane.)

Now our only remaining problem is that vertices may have multiple hits from the left. We only need to show how
to get rid of the leftmost one at each vertex, wolog in the upper half plane.  Proceed thus:

$$ \vpic {tosplit} {1in} \qquad \rightarrow \qquad \vpic {tosplit2} {1.5in}  $$

The two edges hitting the vertex from the left have been replaced by one. A brace of type I and II 
Reidemeister moves shows that replacing the picture on the left by the one on the right preserves the
link $L$. And all the added vertices are hit exactly once, top and bottom, from the left.

Continuing in this way one obtains a $\Gamma(L)$ consisting of  two rooted planar/linear $n$-trees top
and bottom from which an element of $F_3$ (in fact it's in $F_2$) may be reconstructed by the method we
described for going from a signed planar graph back to a link diagram.

This ends the proof of theorem \ref{alexander}.
\end{proof}

The algorithm for constructing Thompson group elements from links in the above proof is of theoretical interest
only. In particular the sign correcting move is very inefficient. Even for the Hopf link, if one starts with the following
$\Gamma(L)$ and applies the algorithm, the Thompson group element is very complicated (remember that the top
edge is a positive one by convention):
$$\vpic {hopf} {1.2in} $$

\section{The annular version, Thompson's groups $T_n$.} 

There are two other well known versions of the Thompson groups $F_n$, namely $T_n$ and $V_n$. 
$T_n$ is a group of PL homeomorphisms of the circle (rather than the interval) with slopes all powers of $n$
and non smooth ponts all of the form $\D \frac{a}{n^b}$. $T_n$ contains an obvious copy of $F_n$ and can be
obtained from it by adding rotations of the circle by angles $\D \frac{2\pi a}{n^b}$.
$V_n$ is even bigger, allowing discontinuous permutations of the intervals on the circle. 

Both $T_n$ and $V_n$ can be constructed from our category of forests method by suitably decorating the
forests with cyclic and general permutations respectively-see \cite{nogo}. But the functor to tangles only works for
$T_n$ because of the discontinuities in $V_n$. Obviously all knots and links can be obtained from $T_3$ from this functor
since they can already be made from $F_3$, but some links may be much easier to realise using $T_3$.

There is a bigger group called the ``braided Thompson group'' \cite{braidedthompson} which should have all the advantages of both
braids and the Thompson groups.

\section{The group structure-analogy with braid groups.}

We are promoting the Thompson groups as groups from which links can be constructed, like the braid groups.
In this section we will establish a strong, albeit not always straightforward, analogy between the two groups and their
relationships with links. 

The most obvious first thing missing from our construction of the Thompson groups in section \ref{construction}, which is front and centre in the braid 
groups, is a gemoetric understanding of the group law. But this is supplied by work of Guba and Sapir in \cite{gus} and Belk in \cite{belk}.
Here is how to compose two $F_3$ elements, given as pairs of rooted  planar ternary ternary trees $\frac{s}{t}$ as
usual, from this point of view.

Given $\D \frac{r}{s}$ and $\D \frac{s}{t}$, draw them as we have with the denominator on the bottom and the numerator, upside
down, on the top, joined at the leaves, thus:

$$ \vpic {f32} {1.5in} $$

Now arrange the picture of $\D \frac{s}{t}$ underneath that of $\D \frac{r}{s}$ with the top vertices aligned, and 
fuse the top edges thus:

$$ \vpic {f3mult1} {1in} $$

Now apply the following two cancellation moves:

$$(i) \vpic {cancel1} {1in} \mbox{     and    }  (ii)  \vpic {cancel2} {1.5in} $$ 
until they can no longer be applied. It is easy enough to see that at this point the remaining diagram
can be decomposed into a pair of ternary planar trees, thus another element of $F_3$. We illustrate with the above 
example:
$$ \vpic {f3mult2} {0.8in}  \quad \rightarrow \quad   \vpic {f3mult3} {0.8in}  \quad \rightarrow \quad   \vpic {f3mult4} {0.8in} \quad \rightarrow \quad   \vpic {f3mult5} {0.8in} $$

Now we draw in a curve showing the split between the top and bottom trees, and redraw as a standard picture.

$$ \vpic {f3mult6} {1.4in}   \rightarrow  \vpic {f3mult7} {1in}  =  \vpic {f3mult8} {1.3in} $$

N.B. \emph{ It is important to note that of the vacuum expectation value of the composition- by replacing the
vertices of the diagrams by crossings- cannot be done until all the cancellations have been made.}

This is because the knot theoretic move \vpic{knotmove1} {1in} is NOT an isotopy. It is what we call and ``elementary cobordism''. 

\begin{definition} Changing \vpic {cobordism1} {0.3in} to \hpic {cobordism2} {0.3in} in 
a link diagram is called an  ``elementary cobordism''.
\end{definition}

Thus each time we apply the cancellation move (ii) above we are changing the undelying link by an elementary 
cobordism. In particular we see that the first cancellation applied in the sequence of moves in the Guba-Sapir-Belk
composition method actually transforms the underlying link almost into the connect sum-it differs from it by
a single elementary cobordism.

Let us call $H$ the subgroup of $F_3$ consisiting of all elements of the form $$\vpic {h} {1in}  . $$ (Obviously $H\cong F_3$.)
For $h\in H$, $L(h)$ always contains a distant unknot sitting on top of another link. Let $\tilde h$ be
$L(h)$ with this distant unknot removed.
Let us also define the size $|g|$ of an element $g\in F_3$ to be the number of vertices in a tree of a minimal
pair of trees picture of $g$, not counting the root vertex. Then $|gh|\leq |g|+|h|$ with equality only if there is 
only the first cancellation when using
the Guba-Sapir-Belk composition.
\begin{proposition} \label{gh} Let $g$ and $h$ be elements of $H<F_3$ with $|gh|= |g|+|h|$. Then 
$$L(\widetilde {gh})=L(\tilde g)\#L(\tilde h)$$
\end{proposition}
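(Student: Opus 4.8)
\emph{Proof sketch.} The plan is to build the diagram of $L(gh)$ directly out of the diagrams of $L(g)$ and $L(h)$ and to recognise the single move that joins them as the band of a connected sum. First I would fix minimal pair-of-trees presentations of $g$ and $h$ inside $H$ and form their Guba--Sapir--Belk composite, stacking the picture of $g$ on top of that of $h$, aligning the top vertices and fusing the top edges as in the description preceding the statement. The hypothesis $|gh|=|g|+|h|$ is precisely the extremal case analysed there: it forces the reduction of the stacked picture to the minimal pair of trees of $gh$ to consist of a \emph{single} cancellation, the first one, which is a move of type (ii) (any type (i) cancellations being isotopies and so leaving the link unchanged). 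Since replacing the vertices by crossings and then performing a type (ii) cancellation is, by the definition of an elementary cobordism, exactly one saddle move, the link $L(gh)$ of Definition \ref{linkfromf3} is obtained from the crossing diagram of the \emph{un-cancelled} stack by exactly one elementary cobordism.

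Next I would identify that un-cancelled crossing diagram. Reading the stack from top to bottom, it is built from the diagram producing $L(g)$ sitting above the diagram producing $L(h)$, glued along the interface line where the denominator carets of $g$ meet the numerator carets of $h$. Using the special shape of elements of $H$ --- for which $L(\cdot)$ splits off a distant unknot sitting on top --- I would check that, away from the interface, this diagram is the split union of the essential parts $\tilde g$ and $\tilde h$ together with a single distant unknot, and that the one remaining type (ii) crossing pair sits on a pair of strands, one belonging to $\tilde g$ and one to $\tilde h$. The single elementary cobordism is then a band attached along these two strands.

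Finally I would verify that this band is unknotted and meets each of $\tilde g$ and $\tilde h$ in a single arc, so that the saddle merges one component of $\tilde g$ with one component of $\tilde h$ and realises exactly the connected sum $\tilde g \# \tilde h$, the leftover trivial circle being the distant unknot of $L(gh)$; removing it gives $L(\widetilde{gh})=L(\tilde g)\#L(\tilde h)$. The main obstacle is this last local analysis: one must read off, from the explicit $H$-pictures and the type (ii) move, that the unique band is a connected-sum band between the essential parts rather than a self-band of one of them or a band onto the distant unknot, and that the distant-unknot bookkeeping leaves \emph{exactly one} trivial component. The hypothesis $|gh|=|g|+|h|$ is what guarantees a single band; with further cancellations each extra type (ii) move would contribute another elementary cobordism, and the underlying link would in general cease to be the connected sum.
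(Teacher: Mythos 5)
Your sketch is correct and takes essentially the same approach as the paper: the paper's entire proof is ``This is immediate on drawing a picture of $gh$,'' and the picture it intends is exactly the one you assemble --- the Guba--Sapir--Belk stack in which the hypothesis $|gh|=|g|+|h|$ leaves only the first type (ii) cancellation, whose single elementary cobordism joins a strand of $\tilde g$ to a strand of $\tilde h$ while the two distant unknots merge into the one distant unknot of $L(gh)$. The final ``obstacle'' you flag (checking that the unique band is a trivial connected-sum band between the essential parts, with exactly one trivial circle left over) is precisely the verification the paper declares immediate from the drawn picture, so your write-up is an articulated version of the paper's argument rather than a different one.
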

\begin{proof}
This is immediate on drawing a picture of $gh$.
\end{proof}

Thus group composition in $F_3$ can be \emph{directly} related to the connected sum of the links. 

\5
We now remind the reader of the two ways links can be obtained from the braid groups $B_n$. They are the trace closure:
$\D \vpic{trace} {0.5in} \rightarrow \vpic {trace2} {0.5in} $ and the plat closure $\D \vpic{plat} {0.5in} \rightarrow \vpic {plat2} {0.5in} $. The first produces oriented links and the second is inherently unoriented. (We call the 
first the ``Trace'' closure rather than just the closure to distinguish it from the plat closure.
Both closures will produce an arbitrary link for a sufficiently large $n$. (The closure result is a theorem of Alexander \cite{alexander}, the plat
closure exisitence follows fairly obviously from an $n$-bridge picture of a link.) In both cases it is known exactly when two different braids give the same link-for the closure this
is a theorem of Markov (see \cite{birman1}) and for the plat closure a theorem of Birman \cite {birman2}.

We can now create a table comparing and contrasting the braid groups and Thompson group as link constructors:

\begin{tabular}
{|c|c|}
\hline
 Braid groups & Thompson group  \\
 \hline
  Two versions: & Two versions\\
  Unoriented (Plat closure). & Unoriented (all of $F_3$).\\
  Oriented (Trace closure). &  \makecell{Oriented (The subgroup $\overset{\rightarrow}{F_3} <F_3$ \\of definition \ref{oriented} .)}\\
  \hline
  All knots and links  as closure: & All knots and links as $L(g)$:  \\
  Alexander theorem. & ``Alexander theorem''. \ref{alexander}\\
  \hline
 \makecell{Braid index, plat index\\ (=bridge number).} &\makecell{ $\overset{\rightarrow}{F_3}$ index, $F_3$ index,\\ $\overset{\rightarrow}{F_2}$ index, $F_2$ index. } \\
  \hline
 \makecell{ Markov, Birman theorems,\\
 conjugation, double cosets,\\ stabilisation }& \large{??,??,??} \\
  \hline
  Group law: & Group law:\\
  \makecell{Many cobordisms applied to\\ connected sum.} & Connected sum directly.\\
  \makecell{No changes but isotopy \\after these cobordisms.} & \makecell{More cobordisms needed \\after conneced sum.}\\
  \hline 
  \large{??} & $L(gh)=L(g)\#L(h)  \iff |gh|=|g|+|h|$\\ 
  \hline
  \makecell{ All $B_n$'s needed to get \\all knots and links.} & \makecell{One finitely presented group \\gives all knots and links.}\\
\hline
Contains free groups. & Doesn't contain free groups. \\
\hline
Non-amenable & \large{??}\\
\hline
\makecell{Annular version available\\
See \cite{gl}}. & Annular version available.\\
\hline 
\end{tabular} 

\5
One could argue that it is the inductive limit $B_\infty$ of the braid groups that is the correct analogue of a Thompson group, but
even then there is a strong contrast in that $B_\infty$ is not finitely generated. Also the annular version of $B_\infty$ is 
not clear.

\section {Questions.}

We give a list of questions which arise naturally in this work. Some are probably very easy to answer.
\begin{enumerate}
\item "Markov theorem."  

Our theorems concerning the realisation of all links from Thompson group elements are
analogous to Alexander's theorem (\cite{alexander}) which asserts that any (oriented) link may be obtained by closing a braid.
Markov's theorem answers the question of exactly when two braids give the same closure, in terms of simple
changes on the braid group elements. It should be possible to give such a theorem for the Thompson groups $F_3$
and $F_2$. It is easy enough to get moves on group elements that preserve the link, but proving sufficiency of these moves 
has not yet been achieved.
\item A detail about oriented links. 

Theorem \ref{alexander} is very precise-one obtains links without any ambiguity up to distant unlinks (or powers
of $\delta$. But the oriented version, as proved in \cite{jo4}, produces in general links that differ from the desired one by distant
unlinks. So is it true that the Alexander-type theorem for oriented links from a subgroup of $F_2$ or $F_3$ is
true on the nose?
\item A detail about regular isotopy. 

Do all \emph{regular} isotopy classes of link diagrams arise as $L(g)$? (See item \ref{skein} of section  \ref{knots}.)

\item Other Thompson groups.

It is possible to represent links as plane  projections with singularities higher than double points (see, e.g. 
\cite{adams}), e.g. triple and
quadruple points. Such projections naturally arise if one considers the Thompson groups $F_{2k-2}$ as coming from
the category of forests $\mathcal F_{2k-1}$.

\item Proof of theorem \ref{alexander}.

Find a proof more adapted to $F_3$, making the different sign configurations a virtue rather than a vice.
\item Thompson index.

The $F_k$ index of a link $L$ is the smallest number of vertices of a tree such that $L$ is represented 
as the vacuum expectation value of an element of $F_k$ given by a pair of trees with $n$ leaves.
Given that the number of trees with $n$ leaves is finite and the identification of links is algorithmically 
solvable, this is a \emph{finite problem } for a given $L$.

Problem: calculate the $F_3$ and $F_2$ indices of the Borromean rings. (The diagram just before definition
\ref{linkfromf3} shows that the $F_3$ index of the trefoil is 3, its $F_2$ index is more than 3 as can be seen by
enumerating all the 25 pairs of binary planar rooted trees with 3 vertices each.)
\item Irreducibility.

When are the unitary representations of the Thompson groups coming from unitary TQFT's irreducible?
(On the closed $F_n$-linear span of the vacuum.) Some progress was made on this in \cite{joneswysiwyg} where a  family of unitary 
representations using the construction of section \ref{construction} for a TQFT with a slightly different functor $\Phi$
were shown to be irreducible. 
\item Flat connections.

TQFT braid group representations are known to come from monodromy of flat connections on a classifying space 
(KZ connection). 
Can we exhibit the representations of this paper, or at least some of them, as coming from flat connections on 
Belk's (or some other) classifying space \cite{belk}?

\end{enumerate}

\end{document}